\newtheorem{theorem}{Theorem}[section]
\newtheorem{cor}{Corollary}[section]
\newtheorem{lemma}{Lemma}[section]
\newcommand{\conv}{\mathop{\rm conv}\nolimits}
\newcommand{\ns}{\mathop{\rm SN}\nolimits}
\DeclareMathOperator{\dg2}{deg_2}
\title {Generalizations of Tucker--Fan--Shashkin lemmas}
\author {Oleg R. Musin\thanks{This research is partially supported by the NSF grant DMS-1400876 and the RFBR grant 15-01-99563.}}
\begin{document}

	\ifpdf \DeclareGraphicsExtensions{.pdf, .jpg, .tif, .mps} \else
	\DeclareGraphicsExtensions{.eps, .jpg, .mps} \fi	
	
\date{}
\maketitle

\begin{abstract} Tucker and Ky Fan's lemma are combinatorial analogs of the Borsuk--Ulam theorem (BUT). In 1996, Yu. A. Shashkin proved a version of Fan's lemma, which is a combinatorial analog of the odd mapping theorem (OMT).  We consider generalizations of these lemmas for BUT--manifolds, i.e. for manifolds that satisfy BUT. Proofs rely on a generalization of the OMT and on a lemma about the doubling of manifolds with boundaries that are BUT--manifolds.
\end{abstract}

\medskip

\noindent {\bf Keywords:} Tucker lemma, Ky Fan lemma, Shashkin lemma, Borsuk--Ulam theorem, degree of mapping. 

\section{Tucker's, Fan's and Shashkin's lemmas}

Throughout this paper the symbol ${\mathbb R}^d$ denotes the Euclidean space of dimension $d$.  We denote by  ${\mathbb B}^d$ the $d$-dimensional unit ball and by  ${\mathbb S}^d$ the $d$-dimensional unit sphere. If we consider ${\mathbb S}^d$ as the set of unit vectors $x$ in ${\mathbb R}^{d+1}$, then points $x$ and $-x$ are called {\it antipodal} and the symmetry given by the mapping
 $x \to -x$ is called the {\it antipodality} on  ${\mathbb S}^d$.

\subsection{Tucker and Fan's lemma}

Let $T$ be a triangulation of the $d$-dimensional ball ${\mathbb B}^d$. We call $T$ {\it antipodally symmetric on the boundary}  if the set of simplices of $T$ contained in the boundary of  ${\mathbb B}^d = {\mathbb S}^{d-1}$ is an antipodally symmetric triangulation of  ${\mathbb S}^{d-1}$; that is if $s\subset {\mathbb S}^{d-1}$ is a simplex of $T$, then $-s$ is also a simplex of $T$.

\medskip

\noindent {\bf Tucker's lemma \cite{Tucker}} {\it Let $T$ be a triangulation of  ${\mathbb B}^d$ that is antipodally symmetric on the boundary. Let $$L:V(T)\to \{+1,-1,+2,-2,\ldots, +d,-d\}$$ be a  labelling of the vertices of $T$ that is antipodal (i. e. $L(-v)=-L(v)$)  for every vertex $v$ on the boundary. Then there exists an edge in $T$ that is {\bf complementary}, i.e. its two vertices are labelled by opposite numbers.}

\medskip

\begin{figure}
\begin{center}

  \includegraphics[clip,scale=0.7]{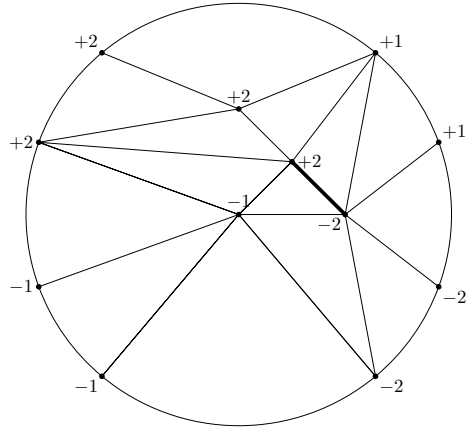}
\end{center}
\caption{Illustration of Tucker's lemma}
\end{figure}

\medskip

There is also a version of Tucker's lemma for spheres:

\medskip

\noindent  {\bf Spherical Tucker's lemma.}  {\it Let $T$ be a centrally symmetric triangulation of the sphere ${\Bbb S}^d$. Let $$L:V(T)\to \{+1,-1,+2,-2,\ldots, +d,-d\}$$ be an antipodal labelling. Then there exists a complementary edge.}

\medskip

Tucker's lemma was extended by Ky Fan \cite{KyFan}:

\medskip

\noindent {\bf Ky Fan's lemma.}  {\it Let $T$ be a centrally symmetric triangulation of the sphere ${\Bbb S}^d$. Suppose that each vertex $v$ of $T$ is assigned a label $L(v)$ from $\{\pm1,\pm2,\ldots,\pm n\}$ in such a way that $L(-v)=-L(v)$. Suppose this labelling does not have complementary edges. Then there are an odd number of $d$-simplices of $T$ whose labels are of the form $\{k_0,-k_1,k_2,\ldots,(-1)^dk_d\}$, where $1\le k_0<k_1<\ldots<k_d\le n$. In particular, $n\ge d+1$.}

\subsection{Shashkin's lemma}
In the 1990's, Yu. A. Shashkin published several works related to discrete versions of classic fixed point theorems  \cite{Shashkin, Shashkin1, Shashkin2, ShashkinT, Shashkin99}. In  \cite{ShashkinT} he proved the following theorem:

\medskip

\noindent {\bf Shashkin's lemma.}  {\it Let $T$ be a triangulation of a planar polygon that is antipodally symmetric on the boundary. Let $$L:V(T)\to \{+1,-1,+2,-2, +3,-3\}$$ be a labelling of the vertices of $T$ that satisfies $L(-v)=-L(v)$ for every vertex $v$ on the boundary. Suppose that this labelling does not have complementary edges. Then for any numbers $a,b,c$, where $|a|=1,\; |b|=2, \; |c|=3$, the total number of triangles in $T$  with labels $(a,b,c)$ and $(-a,-b,-c)$ is odd.}

\medskip

%---------------Fig 2------------------------------------------------------------
\begin{center}
\begin{picture}(320,160)(-140,-80)
% Fig.
\put(-90,-70){Figure 2: Illustration of Shashkin's lemma.}

\multiput(-70,-40)(60,0){4}%
{\line(0,1){120}}

\multiput(-70,-40)(0,40){4}%
{\line(1,0){180}}

\put(-70,40){\line(3,2){60}}
\put(-70,0){\line(3,2){120}}
\put(-70,-40){\line(3,2){180}}

\put(50,-40){\line(3,2){60}}
\put(-10,-40){\line(3,2){120}}

\put(-84,-36){2}
\put(-84,4){1}
\put(-84,44){3}
\put(-84,84){-1}

\put(-22,-36){3}
\put(-22,4){3}
\put(-22,44){1}
\put(-22,84){2}

\put(37,-36){-2}
\put(37,4){-1}
\put(37,44){-2}
\put(37,84){-3}

\put(114,-36){1}
\put(114,4){-3}
\put(114,44){-1}
\put(114,84){-2}

\end{picture}
\end{center}

\medskip

\noindent{\bf Remark.} In other words, Shashkin proved that if $(a,b,c)=(1,2,3), \, (1,-2,3), \, (1,2,-3)$ and $(1,-2,-3)$, then the number of triangles with labels $(a,b,c)$ or $(-a,-b,-c)$ is odd. Denote this number by $\ns(a,b,c)$. Then in Fig. 2 we have
$$ \ns(1,2,3)=3, \; \ns(1,-2,3)=1,\; \ns(1,2,-3)=3, \; \ns(1,-2,-3)=3.$$

Note that this result was published only in Russian and only for two--dimensional case. Moreover, Shashkin attributes this theorem to Ky Fan \cite{KyFan}.

Actually, Shashkin's lemma can be derived from Ky Fan's lemma for $n=d+1$.  However. Shashkin's proof is different and relies on the odd mapping theorem (OMT).  In fact, this lemma is a discrete version of the OMT. That is why we distinguish this result as {\it Shashkin's lemma}.

\medskip

The following is a spherical version of Shashkin's lemma.

\medskip

\noindent {\bf Spherical Shashkin's lemma.}   {\it Let $T$ be a centrally symmetric  triangulation of \, ${\Bbb S}^d$. Let
$$
L:V(T)\to \Pi_{d+1}:=\{+1,-1,+2,-2,\ldots, +(d+1),-(d+1)\}
$$
be an antipodal  labelling of $T$.  Suppose that this labelling does not have complementary edges. Then for any set of  labels $\Lambda:=\{\ell_1,\ell_2,\ldots,\ell_{d+1}\}\subset\Pi_{d+1}$ with $|\ell_i|=i$ for all $i$, the number of $d$--simplices in $T$ that are labelled by $\Lambda$  is odd. }

\subsection{Main results}

In \cite{Mus} we invented BUT (Borsuk--Ulam Type) -- manifolds.
Theorems 3.1--3.4 in this paper extend Tucker's and Shashkin's lemmas for BUT--manifolds. Namely, Theorem 3.1 and Theorem 3.2 are extensions of the spherical Tucker and Shahskin lemmas,  where ${\Bbb S}^d$ is substituted by a BUT--manifold.  Theorems 3.3 and 3.4 are extensions  of the original Tucker and Shashkin lemmas, where   ${\Bbb B}^d$ is substituted by a manifold $M$ with boundary $\partial M$ that is a BUT--manifold.

 Our proof of Theorem 3.2  is relies on a generalization of the odd mapping theorem for BUT--manifolds:

\medskip

\noindent {\bf Theorem 2.1.} {\it Let $(M_1,A_1)$ and $(M_2,A_2)$ be  BUT--manifolds. Then any odd  continuous mapping $f:M_1\to M_2$ has odd degree.}

\medskip

Theorems 3.3 and 3.4 follow from Theorems 3.1 and 3.2 by using Lemma 3.1, which is about the doubling of manifolds with boundaries that are BUT--manifolds.

In Section 4 we extend  for BUT--manifolds Shaskin's proof of two Tucker's theorems about covering families from \cite{Tucker}. Actually, these theorems are corollaries of Theorem 3.2.

\section{The odd mapping theorem}

We say that a mapping $f:{\Bbb S}^d\to {\Bbb S}^d$ is {\it odd} or {\em antipodal} if $f(-x)=-f(x)$ for all $x\in {\Bbb S}^d$. If $f$ is a continuous mapping, then  $\deg{f}$ (the degree of $f$) is well defined.

Let $f:M_1\to M_2$ be a continuous map between two closed manifolds $M_1$ and $M_2$ of the same dimension. The degree  is a number that represents the amount of times that the domain manifold wraps around the range manifold under the mapping. Then $\dg2(f)$ (the degree modulo 2) is 1 if this number is odd and 0 otherwise. It is well known that $\dg2(f)$ of a continuous map $f$  is a homotopy invariant (see \cite{Milnor}).

The classical {\bf odd mapping theorem} states that

\medskip

\noindent {\it Every continuous odd mapping $f:{\Bbb S}^d\to {\Bbb S}^d$  has odd degree.}

\medskip

Shashkin \cite{ShashkinT} (see also \cite[Proposition 2.4.1]{Mat}) gives a proof of this theorem for simplicial mappings $f:{\Bbb S}^d\to {\Bbb S}^d$. Conner and Floyd \cite{CF60} considered Theorem 2.1 for a wide class of spaces. Here we extend the odd mapping theorem for BUT--manifolds. In our paper \cite{Mus}, we extended the Borsuk--Ulam theorem for manifolds.\\
Let  $M$ be a connected compact PL (piece-wise linear) $d$-dimensional manifold without boundary with a free simplicial involution $A:M\to M$, i. e. $A^2(x)=A(A(x))=x$ and $A(x)\ne x$.
We say that a pair $(M,A)$ is a {\it BUT (Borsuk-Ulam Type) manifold} if for any continuous  $g:M \to {\Bbb R}^d$ there is a point $x\in M$ such that $g(A(x))=g(x)$. Equivalently, if a continuous  map $f:M \to {\Bbb R}^d$  is { antipodal}, i.e. $f(A(x))=-f(x)$,  then the set of zeros $Z_f:=f^{-1}(0)$ is not empty.

In \cite{Mus}, we found several equivalent necessary and sufficient conditions for manifolds to be BUT. In particular,

\medskip

\noindent {\it $M$ is a $d$--dimensional BUT--manifold if and only if $M$ admits an antipodal continuous transversal to zeros mapping $h:M \to {\Bbb R}^d$ with $|Z_h|=2\pmod{4}$.}

\medskip

\noindent A continuous mapping $h:M \to {\Bbb R}^d$ is called {\it transversal to zero} if there is an open set $U$ in ${\Bbb R}^d$ such that $U$ contains $0$, $U$ is homeomorphic to the open $d$-ball and $h^{-1}(U)$ consists of a finite number open sets in $M$ that are homeomorphic to open $d$-balls.

The class of BUT--manifolds is sufficiently large. It is clear that $({\Bbb S}^d,A)$ with $A(x)=-x$ is a BUT-manifold.   Suppose that $M$ can be represented as a connected sum $N\# N$, where $N$ is a closed PL manifold. Then $M$ admits a free involution. Indeed, $M$ can be  ``centrally symmetrically" embedded to ${\Bbb R}^k$, for some $k$, and the antipodal symmetry $x\to -x$ in ${\Bbb R}^k$ implies a free involution $A:M\to M$ \cite[Corollary 1]{Mus}. For instance,   orientable  two-dimensional  manifolds $M^2_g$ with even genus $g$ and non-orientable manifolds $P^2_m$ with even $m$, where $m$ is the number of  M\"obius bands,  are BUT-manifolds.

Let $M_i,\; i=1,2$, be a manifold with a free involution $A_i$. We say that a mapping $f:M_1\to M_2$ is {\em antipodal} (or {\em odd}, or {\em equivariant}) if $f(A_1(x))=A_2(f(x))$ for all $x\in M_1$.

\begin{theorem} Let $(M_1,A_1)$ and $(M_2,A_2)$ be $d$-dimensional BUT--manifolds. Then any odd  continuous mapping $f:M_1\to M_2$ has odd degree.
\begin{proof} Since $(M_2,A_2)$ is BUT, there is a continuous antipodal transversal to zeros mapping $g:M_2 \to {\Bbb R}^d$ with $|Z_g|=4m_2+2$ \cite[Theorem 2]{Mus}.
	
Let $h:=g\circ f$. Then $h:M_1 \to {\Bbb R}^d$ is continuous and antipodal. 	Since the degree of a mapping is a homotopy invariant, without loss of generality we may assume that $h$ is a transversal to zero mapping (see  \cite[Lemma 3]{Mus}). Therefore $|Z_h|=4m_1+2$.
On the other hand,
$$
|Z_h|=\sum\limits_{x\in Z_g}{|f^{-1}(x)|}.
$$
Then
$$
2m_1+1=(2m_2+1)\dg2{f}\pmod{2}.
$$
 Thus, the degree of ${f}$ is odd.
\end{proof}

\end{theorem}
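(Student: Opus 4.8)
The plan is to reduce the statement to a mod-2 counting argument using the characterization of BUT–manifolds from \cite{Mus}: a $d$–dimensional manifold is BUT if and only if it admits an antipodal continuous transversal-to-zero map into ${\Bbb R}^d$ whose zero set has cardinality $\equiv 2 \pmod 4$. First I would apply this to $(M_2,A_2)$ to obtain such a map $g:M_2\to{\Bbb R}^d$ with $|Z_g|=4m_2+2$. Composing with the given odd map $f$, the map $h:=g\circ f:M_1\to{\Bbb R}^d$ is again continuous and antipodal (since $h(A_1(x))=g(f(A_1(x)))=g(A_2(f(x)))=-g(f(x))=-h(x)$).

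Next I would arrange that $h$ is itself transversal to zero. This is where a small technical point enters: $g\circ f$ need not be transversal, but since $\dg2$ is a homotopy invariant and one is free to perturb $h$ within its antipodal homotopy class (using the perturbation lemma, \cite[Lemma 3]{Mus}), one may assume $h$ is transversal to zero without changing $\dg2 f$. Then, because $(M_1,A_1)$ is also BUT, any antipodal transversal map $M_1\to{\Bbb R}^d$ must have $|Z_h|\equiv 2\pmod 4$ — this is exactly the forward direction of the characterization combined with the fact that the parity mod $4$ of the zero count of an antipodal transversal map is an invariant of the manifold. So $|Z_h|=4m_1+2$.

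The combinatorial heart is then the identity $|Z_h|=\sum_{x\in Z_g}|f^{-1}(x)|$, which holds because $h^{-1}(0)=f^{-1}(g^{-1}(0))=f^{-1}(Z_g)=\bigsqcup_{x\in Z_g}f^{-1}(x)$. Reducing mod $2$, and using that $|f^{-1}(x)|\equiv \dg2 f \pmod 2$ for each regular value $x$ (after the transversality reduction the points of $Z_g$ are regular values of $f$, or can be taken so), we get
$$
2m_1+1 \equiv |Z_g|\cdot \dg2 f \equiv (2m_2+1)\,\dg2 f \pmod 2,
$$
whence $\dg2 f = 1$, i.e. $f$ has odd degree.

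The main obstacle I anticipate is the transversality/regular-value bookkeeping: one must ensure simultaneously that $h$ is transversal to $0$ and that each $x\in Z_g$ is a regular value of $f$ (so that $|f^{-1}(x)|$ counts preimages with the right parity), all while staying inside the antipodal homotopy class so that $\dg2 f$ is unchanged. This is handled by an equivariant general-position argument — perturb $g$ first so its zeros are regular values of $f$, then perturb $h$ — but it is the step requiring genuine care; the rest is formal manipulation of the two parity conditions supplied by the BUT hypothesis on $M_1$ and $M_2$.
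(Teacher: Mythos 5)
Your proposal is correct and follows essentially the same route as the paper: compose the given odd map $f$ with an antipodal transversal-to-zero map $g$ on $M_2$ supplied by the BUT characterization, use homotopy invariance to assume $h=g\circ f$ is transversal to zero, and compare the two parity conditions $|Z_h|\equiv 2$, $|Z_g|\equiv 2 \pmod 4$ via $|Z_h|=\sum_{x\in Z_g}|f^{-1}(x)|$. Your extra attention to making the points of $Z_g$ regular values of $f$ is a reasonable elaboration of a step the paper treats implicitly, not a different argument.
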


\section{Tucker's and Shashkin's lemmas for BUT--manifolds}

In our papers \cite{Mus,MusSpT,MusVo} are considered extensions of Tucker's lemma. Here we consider generalizations of Tucker's and Shashkin's lemmas for manifolds with and without boundaries.

Let $T$ be an {\it antipodally symmetric} (or {\it antipodal}) triangulation of a BUT--manifold $(M,A)$. This means that $A:T\to T$ sends simplices to simplices. Denote by $\Pi_n$ the set of labels $\{+1,-1,+2,-2,\ldots, +n,-n\}$   and let $L:V(T)\to \Pi_n$ be a labeling of $T$. We say that this labelling is {\it antipodal} if $L(A(v))=-L(v)$. An edge $uv$ in $T$ is called {\it complementary} if $L(u)=-L(v)$.

\begin{theorem}{\bf (\cite[Theorem 4.1]{MusSpT})} \label{TBUT} Let $(M,A)$ be a $d$-dimensional BUT--manifold. Let $T$ be an antipodal triangulation of   $M$.   Then for any antipodal  labelling $L:V(T)\to \Pi_d$  there exists a complementary edge.
\end{theorem}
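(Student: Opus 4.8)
The plan is to argue by contradiction, reducing the statement to the defining property of a BUT--manifold. Assume the antipodal labelling $L:V(T)\to\Pi_d$ has no complementary edge; from $L$ I will manufacture a continuous antipodal map $f:M\to{\mathbb R}^d$ with empty zero set, which is impossible because $(M,A)$ is BUT.

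First I would set up the target. Let $e_1,\ldots,e_d$ denote the standard basis of ${\mathbb R}^d$, so that each label $k\in\Pi_d$ names the cross--polytope vertex $\sgn(k)\,e_{|k|}\in\conv\{\pm e_1,\ldots,\pm e_d\}$. Define $f$ on $V(T)$ by $f(v):=\sgn(L(v))\,e_{|L(v)|}$ and extend $f$ affinely over each simplex of $T$. Since $T$ is a triangulation of $M$, this produces a well-defined continuous map $f:M\to{\mathbb R}^d$. Because $A$ is simplicial and $L$ is antipodal we have $f(A(v))=-f(v)$ for every vertex $v$, and since $A$ carries each simplex $s$ onto a simplex $A(s)$ while $f$ is affine on both, the identity $f\circ A=-f$ extends from vertices to all of $M$; thus $f$ is antipodal.

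The crux is to verify $0\notin f(M)$. Fix a simplex $s$ of $T$, let $S\subset\Pi_d$ be its set of vertex labels, and let $I:=\{\,|k|:k\in S\,\}$. If some index $i\in I$ were realized by both $+i$ and $-i$ in $S$, the two corresponding vertices of $s$ would form a complementary edge, contrary to assumption; hence for each $i\in I$ there is a well-defined sign $\varepsilon_i\in\{+1,-1\}$ with $\varepsilon_i\,i\in S$. Every vertex $v$ of $s$ then satisfies $f(v)=\varepsilon_i e_i$ for some $i\in I$, and each such point lies on the affine hyperplane $H_I:\ \sum_{i\in I}\varepsilon_i x_i=1$. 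As $H_I$ is convex and $f(s)$ is the convex hull of the points $f(v)$, we get $f(s)\subset H_I$, and $0\notin H_I$ since the left-hand side vanishes at the origin. Therefore $f$ has no zeros, contradicting the BUT property of $(M,A)$; this proves that a complementary edge must exist.

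I expect the only genuine obstacle to be the geometric observation in the last paragraph --- that ruling out complementary edges confines the affine image of each simplex to a facet hyperplane of the cross--polytope that avoids the origin. The remaining points (well-definedness and continuity of the piecewise-affine extension, and its equivariance) are routine consequences of $T$ being a triangulation and $A$ a simplicial involution preserving $T$. One could alternatively route the argument through the odd mapping theorem (Theorem 2.1), but the direct appeal to the definition of a BUT--manifold is shorter.
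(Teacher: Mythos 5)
Your proof is correct and follows essentially the route the paper itself indicates (see the Remark after Theorem 3.2): construct the simplicial map $f_L$ sending $M$ into the boundary of the crosspolytope $C^d$, and use the absence of complementary edges to confine the image of each simplex away from the origin. The only difference is the closing step: you appeal directly to the definition of a BUT--manifold (an antipodal continuous map $M\to\mathbb{R}^d$ must have a zero), whereas the paper's remark phrases the contradiction through degree and Theorem 2.1; your version is, if anything, cleaner, since it avoids attributing a degree to a map whose image lies in the $(d-1)$-dimensional set $\partial C^d$.
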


Any antipodal labelling $L:V(T)\to \Pi_n$ of an antipodally symmetric triangulation $T$ of $M$  defines a simplicial map $f_L:T\to {\Bbb R}^n$. Let $\{e_1,-e_1,e_2,-e_2,\ldots,e_n,-e_n\}$ be the standard orthonormal basis in ${\Bbb R}^n$.
For $v\in V(T)$, set $f_L(v):=e_i$ if $L(v)=i$ and  $f_L(v):=-e_i$ if $L(v)=-i$. Since $f_L$ is defined on $V(T)$,  it defines a simplicial  mapping $f_L:T\to  {\Bbb R}^n$ (See details in \cite[Sec. 2.3]{Mat}.)

The following theorem is a version of Shashkin's lemma for manifolds without boundary.

\begin{theorem} \label{SBUT} Let $(M,A)$ be a $d$-dimensional BUT--manifold. Let $T$ be an antipodally symmetric triangulation of   $M$.   Let $L:V(T)\to \Pi_{d+1}$  be an antipodal  labelling of $T$.  Suppose that this labelling does not have complementary edges. Then for any set of  labels $\Lambda:=\{\ell_1,\ell_2,\ldots,\ell_{d+1}\}\subset\Pi_{d+1}$ with $|\ell_i|=i$ for all $i$, the number of $d$--simplices in $T$ that are labelled by $\Lambda$  is odd.
\end{theorem}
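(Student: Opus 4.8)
The plan is to encode the labelling $L$ as an equivariant simplicial map into the boundary of a cross-polytope and then to identify the number of $d$-simplices labelled by $\Lambda$ with the mod-$2$ degree of that map, which Theorem 2.1 forces to be $1$.

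First I would pass to the simplicial map $f_L\colon T\to{\mathbb R}^{d+1}$ associated with $L$, as defined just before the statement: $f_L(v)=\sgn(L(v))\,e_{|L(v)|}$ on vertices, extended affinely on each simplex. Since $L$ is antipodal, $f_L$ is odd, $f_L(A(x))=-f_L(x)$. The hypothesis that there are no complementary edges says that the vertices of any simplex $s$ of $T$ carry no pair of opposite labels; hence the vertex images $f_L(w)$, $w\in s$, are signed basis vectors with pairwise distinct indices, so their convex hull is a face of the cross-polytope $Q:=\conv\{\pm e_1,\dots,\pm e_{d+1}\}$. Consequently $f_L$ takes values in the boundary $\partial Q$ and never vanishes, and the image of a $d$-simplex of $T$ is a face of $Q$ of dimension at most $d$; it is a facet of $Q$ exactly when the $d+1$ labels of the vertices of $s$ are pairwise distinct, in which case that facet equals $\sigma_{\Lambda'}:=\conv\{\sgn(\ell)\,e_{|\ell|}:\ell\in\Lambda'\}$ with $\Lambda'=\{L(w):w\in s\}$, and otherwise the image of $s$ has dimension strictly less than $d$.

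Next, $\partial Q$ is a triangulated $d$-sphere, the restriction of $x\mapsto -x$ to $\partial Q$ is a free simplicial involution, and radial projection is an equivariant homeomorphism $(\partial Q,-\mathrm{id})\to({\mathbb S}^d,-\mathrm{id})$, the latter being a BUT--manifold. Hence $f_L\colon(M,A)\to(\partial Q,-\mathrm{id})$ is an odd continuous map between connected $d$-dimensional BUT--manifolds, and Theorem 2.1 gives $\dg2(f_L)=1$. On the other hand, for a simplicial map between closed triangulated $d$-manifolds, $\dg2$ is congruent modulo $2$ to the number of $d$-simplices of the domain that are mapped homeomorphically onto an arbitrarily fixed facet of the range, this parity being independent of the chosen facet (this is the combinatorial degree computation underlying the simplicial odd mapping theorem; see \cite[Sec.~2.3, Proposition~2.4.1]{Mat}). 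Taking the facet to be $\sigma_\Lambda$ and using the first step, the $d$-simplices of $T$ mapped homeomorphically onto $\sigma_\Lambda$ are precisely those whose set of vertex labels equals $\Lambda$; therefore the number of $d$-simplices of $T$ labelled by $\Lambda$ is $\equiv\dg2(f_L)=1\pmod 2$, which is the assertion.

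The step that requires care is the combinatorial interpretation of $\dg2(f_L)$: one must check that a generic point in the relative interior of $\sigma_\Lambda$ is a regular value of $f_L$, that its preimage meets only $d$-simplices whose label set is exactly $\Lambda$ (each in a single point, since a $d$-simplex with a repeated label maps into a face of dimension $<d$ and one with a different set of pairwise distinct labels maps onto another facet), and that the resulting parity is the same for every facet of $\partial Q$. The remaining ingredients are just the bookkeeping above together with the already-established Theorem 2.1.
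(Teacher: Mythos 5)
Your proposal is correct and follows essentially the same route as the paper: encode $L$ as the odd simplicial map $f_L$ into the boundary of the crosspolytope $C^{d+1}$ (a BUT sphere), invoke Theorem~2.1 to get odd degree, and read off the parity of the number of $d$-simplices labelled by $\Lambda$ as the mod-$2$ count of preimages of the facet $\sigma_\Lambda$. Your write-up merely makes explicit the combinatorial computation of $\dg2(f_L)$ that the paper leaves implicit in the phrase ``the number of preimages of the simplex with indexes from $\Lambda$ is odd.''
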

\begin{proof} Since $L$ has no complimentary edges, $f_L:T\to  {\Bbb R}^{d+1}$ is an antipodal mapping of $M$ to the boundary of the crosspolytope $C^{d+1}$ that is the  convex hull
$
\conv{\{e_1,-e_1,\ldots,e_{d+1},-e_{d+1}\}}.
$
Note that $\partial C^{d+1}$ is a simplicial sphere ${\Bbb S}^d$, which is a BUT-manifold. Therefore, 
Theorem 2.2 implies that the number of preimages of the simplex in  $\partial C^{d+1}$ with indexes from $\Lambda$ is odd. It completes the proof.
\end{proof}

\noindent{\bf Remark.} Theorem 3.1 can be proved using the same arguments. Indeed, suppose that $L:V(T)\to \Pi_d$  has no complementary edges. Then $f_L$ sends $M$ to $\partial C^d$. Since $\dim{\partial C^d}=d-1$, $\deg{f_L}=0$. This contradicts Theorem 2.1.

\medskip

Now we extend Tucker's and Shashkin's lemmas for the case when $M$ is a manifold with boundary that is a BUT--manifold.  But first, prove that there exists a ``double'' of $M$ that is a BUT-manifold.

\begin{lemma} \label{Lemma1}  Let $M$ be a compact PL manifold with boundary $\partial M$. Suppose  $(\partial M,A)$ is a BUT--manifold. Then there is a BUT--manifold $(\tilde M,\tilde A)$ and a submanifold $N$ in $\tilde M$  such that  $N\simeq M$, $\tilde A|_{\partial N}\simeq A$, $ (N\setminus\partial N) \cap \tilde A(N\setminus\partial N)=\emptyset$ and
$$\tilde M\simeq (N\setminus\partial N)\cup \partial N \cup \tilde A(N\setminus\partial N).$$
\end{lemma}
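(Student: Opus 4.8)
The plan is to build $\tilde M$ as a \emph{twisted} double of $M$. The ordinary double $M\cup_{\partial M}M$ carries a ``flip'' involution that fixes the seam pointwise, hence is not free; the fix is to glue the two copies of $M$ \emph{along} $A$, so that the flip becomes a free involution restricting to $A$ on the seam. Concretely, take disjoint copies $M_1,M_2$ of $M$ with PL--homeomorphisms $\phi_i\colon M\to M_i$ and set
$$
\tilde M:=(M_1\sqcup M_2)/\!\sim,\qquad \phi_1(x)\sim\phi_2(A(x))\ \text{for }x\in\partial M,
$$
which is a connected closed PL $d$--manifold (each $M_i$ is glued along its entire boundary, and $\partial M\simeq\Sigma:=\phi_1(\partial M)$ is nonempty and connected). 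Define $\tilde A(\phi_i(y)):=\phi_{3-i}(y)$. Using $A^2=\mathrm{id}$ one checks that $\tilde A$ is well defined across $\Sigma$, is an involution, and is free: it exchanges $\mathrm{int}\,M_1$ and $\mathrm{int}\,M_2$, and on $\Sigma$ it sends $\phi_1(x)$ to $\phi_2(x)=\phi_1(A(x))\ne\phi_1(x)$. With $N:=M_1$ the structural assertions are then immediate: $N\simeq M$ via $\phi_1$; $\partial N=\Sigma$ with $\phi_1^{-1}\circ\tilde A|_\Sigma\circ\phi_1=A$, i.e. $\tilde A|_{\partial N}\simeq A$; $(N\setminus\partial N)\cap\tilde A(N\setminus\partial N)=\mathrm{int}\,M_1\cap\mathrm{int}\,M_2=\emptyset$; and $\tilde M=\mathrm{int}\,M_1\cup\Sigma\cup\mathrm{int}\,M_2=(N\setminus\partial N)\cup\partial N\cup\tilde A(N\setminus\partial N)$. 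Triangulating $M$ so as to extend a chosen triangulation of $\partial M$ makes the gluing and the map $\tilde A$ simplicial. So the only real content is that $(\tilde M,\tilde A)$ is BUT.

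To prove that, I would use the criterion recalled above: it suffices to produce an antipodal continuous transversal--to--zero map $\tilde h\colon\tilde M\to\mathbb R^d$ with $|Z_{\tilde h}|\equiv2\pmod 4$. Since $(\partial M,A)$ is a $(d-1)$--dimensional BUT--manifold, there is an antipodal continuous transversal--to--zero map $h_\partial\colon\partial M\to\mathbb R^{d-1}$ with $|Z_{h_\partial}|\equiv2\pmod4$. Fix a compact PL collar of $\partial M$ in $M$; it produces a bicollar $B\subset\tilde M$ of $\Sigma$ with $B\simeq\partial M\times[-1,1]$, where $\partial M\times\{0\}=\Sigma$, $\partial M\times[0,1]\subset M_1$ and $\partial M\times[-1,0]\subset M_2$. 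A short computation — the one place the twist by $A$ enters — shows that in these coordinates $\tilde A(x,t)=(A(x),-t)$. Define, on $B$,
$$
\tilde h(x,t):=(h_\partial(x),\,t)\in\mathbb R^{d-1}\times\mathbb R=\mathbb R^d.
$$
This is antipodal because $h_\partial\circ A=-h_\partial$, its zero set inside $B$ is $Z_{h_\partial}\times\{0\}$, and near each such point it looks like (a collar neighbourhood of a transversal zero of $h_\partial$) $\times$ (an interval), hence is transversal to $0$.

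It remains to extend $\tilde h$ over the two ``caps'' $W_1:=M_1\setminus(\partial M\times[0,1))$ and $W_2:=M_2\setminus(\partial M\times(-1,0])$ (so $\tilde M=W_1\cup B\cup W_2$, $\tilde A(W_1)=W_2$) without creating new zeros and keeping it $\tilde A$--antipodal. On $\partial W_1=\partial M\times\{1\}$ the map $\tilde h$ already takes values in the affine hyperplane $H=\{y_d=1\}\simeq\mathbb R^{d-1}$, which misses $0$; since $H$ is convex, Tietze's theorem extends $\tilde h|_{\partial W_1}$ to a continuous map $W_1\to H\subset\mathbb R^d\setminus\{0\}$. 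Then define $\tilde h:=-\tilde h\circ\tilde A$ on $W_2$; it takes values in $-H$, makes $\tilde h$ antipodal on all of $\tilde M$, and agrees with the $B$--definition along $\partial W_2=\partial M\times\{-1\}$ by the identity $\tilde A(x,-1)=(A(x),1)$ together with $h_\partial\circ A=-h_\partial$. By the pasting lemma the resulting $\tilde h\colon\tilde M\to\mathbb R^d$ is continuous; it is antipodal; its zero set is exactly $Z_{h_\partial}\times\{0\}$, so $|Z_{\tilde h}|=|Z_{h_\partial}|\equiv2\pmod4$; and it is transversal to $0$ (use the test ball $U_0\times(-\delta,\delta)$ with $\delta<1$ and $U_0\ni0$ from the transversality of $h_\partial$: since it misses $W_1\cup W_2$, its $\tilde h$--preimage equals $h_\partial^{-1}(U_0)\times(-\delta,\delta)$, a finite union of open $d$--balls). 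Hence $(\tilde M,\tilde A)$ is a BUT--manifold, which finishes the proof.

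I expect the main obstacle to be exactly this extension over $W_1$ and $W_2$: the extended map must simultaneously stay away from $0$, remain $\tilde A$--antipodal, and match $\tilde h|_B$ along both copies of $\partial M$ in $\partial B$. The way around it is to extend \emph{freely} over $W_1$ only — which is possible because the boundary values lie in the contractible set $H\subset\mathbb R^d\setminus\{0\}$ — and then to \emph{define} $\tilde h$ on $W_2=\tilde A(W_1)$ by antipodality, so that the remaining compatibility reduces to the one--line identities $\tilde A(x,\pm1)=(A(x),\mp1)$ and $h_\partial\circ A=-h_\partial$. The only other mildly technical point is the collar computation that the twisted flip acts as $(x,t)\mapsto(A(x),-t)$ on the bicollar, which is precisely what makes the product map $(h_\partial,t)$ antipodal and transfers the mod $4$ count on $\partial M$ to $\tilde M$.
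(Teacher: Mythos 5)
Your proof is correct, and its skeleton matches the paper's: form the double of $M$ twisted by $A$, equip it with the swap involution, and certify the BUT property via the criterion that an antipodal transversal-to-zero map with $|Z_h|\equiv 2\pmod 4$ exists, produced by extending such a map from $\partial M$ (your $h_\partial$, the paper's $g$) without creating new zeros. The realization, however, is genuinely different. The paper first proves an auxiliary embedding statement: $M$ is simplicially embedded into a half-space ${\Bbb R}^q_+$ so that $\partial M$ lands centrally symmetrically (i.e.\ $F(A(y))=-F(y)$) in the bounding hyperplane while the rest goes strictly inside; then $\tilde M:=F(M)\cup(-F(M))$ with $\tilde A(x)=-x$, and the extension $h$ is given by an explicit coordinate formula whose last entry is the ``height'' $x_{m+1}+\cdots+x_n$, positive on one copy, negative on the other, zero on the seam. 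You instead build $\tilde M$ abstractly by gluing two copies of $M$ along $A$, verify freeness of the swap directly, compute $\tilde A(x,t)=(A(x),-t)$ on a bicollar of the seam, set $\tilde h=(h_\partial,t)$ there, and extend over the two caps by a Tietze extension into the hyperplane $\{y_d=1\}$ followed by antipodal reflection. What each buys: the paper's embedding makes $\tilde A$ literally the antipodality of Euclidean space (consistent with the framework of \cite{Mus}) and keeps everything simplicial with no further comment, at the cost of the embedding lemma; your intrinsic construction avoids that lemma, and confining the cap values to the hyperplanes $\pm\{y_d=1\}$ makes the ``no new zeros'' and transversality checks ($\tilde h^{-1}(U_0\times(-\delta,\delta))=h_\partial^{-1}(U_0)\times(-\delta,\delta)$, a finite union of open $d$-balls) arguably cleaner than the paper's vertex-wise formula. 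Your closing remark about triangulating $M$ so as to extend an $A$-invariant triangulation of $\partial M$ is exactly what is needed for $\tilde A$ to be a simplicial involution, so no essential step is missing.
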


\begin{proof} {\bf 1.} First we prove the following statement:\\
{\it
 Let $X$ be a finite simplicial complex. Let $Y$ be a subcomplex of $X$ with a free involution $A:Y\to Y$. Then there is a simplicial embedding $F$ of $X$ into 
${\Bbb R}^q_+:=\{(x_1,\ldots,x_q)\in{\Bbb R}^q: x_1\ge 0\}$, where $q$ is sufficiently large,  such that $Y$ is centrally symmetrically embedded in ${\Bbb R}^q$, i.e. $F(A(y))=-F(y)$ for all $y\in Y$, and $X\setminus Y$ is mapped into the interior of  ${\Bbb R}^q_+$.}
	
Indeed, let $v_1,v_{-1},\ldots,v_m,v_{-m}$ denote vertices of $Y$ such that $A(v_k)=v_{-k}$. Let $\{v_{m+1},\ldots, v_n\}$ be the set of vertices of $X\setminus Y$.

Denote by $C^n$ the $n$--dimensional crosspolytope that is the boundary of convex hull
$$
\conv{\{e_1,-e_1,\ldots,e_n,-e_n\}}
$$
of the vectors of the standard orthonormal basis and their negatives.

Now define an embedding $F:X\to C^n$. Let $F(v_k):=e_k$, $F(v_{-k}):=e_{-k}$, where $1\le k\le m$, $F(v_k):=e_k$, and $k=m+1,\ldots,n$.
Since $F$ is defined for all of the vertices of $X$,  it uniquely defines a simplicial (piecewise linear) mapping $F:X\to C^n\subset {\Bbb R}^n$.
Then
$$F(Y)\subset C^m\subset{\Bbb R}^m=\{(x_1,\ldots,x_n)\in {\Bbb R}^n: x_i=0, \; i=m+1,\ldots,n \},$$
$F(A(y))=-F(y)$ for all $y\in Y$  and
$$
F(X\setminus Y)\subset {\Bbb R}^{n-1}_+:=\{(x_1,\ldots,x_n)\in {\Bbb R}^n: x_{m+1}+\ldots+x_n>0\},
$$
as required.

\medskip
	
\noindent {\bf 2.}	Let $X=M$ and $Y=\partial M$. Then it follows from {\bf 1} that there is an embedding  $F:M\to{\Bbb R}^q_+$ with $F(\partial M)\subset {\Bbb R}^q$ and $F(A(y))=-F(y)$ for all $y\in \partial M$, where $q=n-1$.
Let
$$
\tilde M:=F(M)\cup(-F(M))\subset{\Bbb R}^{q+1}={\Bbb R}^q_+\cup (-{\Bbb R}^q_+) \; \mbox{ and } \; \tilde A(x):=-x \mbox{ for all } x\in\tilde M.
$$
	 It is clear that $\tilde M\simeq (N\setminus\partial N)\cup \partial N \cup \tilde A(N\setminus\partial N),$ where $N:=F(M).$
	
\medskip

\noindent {\bf 3.}	Let us prove that $(\tilde M,\tilde A)$ is BUT. Indeed, since $(\partial M,A)$ is BUT, there is a continuous antipodal transversal to zeros mapping $g:\partial M\simeq\partial N \to {\Bbb R}^{d-1}$ with $|Z_g|=4m+2$, where $d:=\dim{M}.$ We extend this mapping to  $h:\tilde M \to {\Bbb R}^{d}$  with $h|_{\partial N}=g$ and $|Z_h|=|Z_g|=4m+2$.

Let $v=(x_1,\ldots,x_{n})\in{\Bbb R}^{n}$ be a vertex of $\tilde M$. If $v\in\partial N$, then put
$$
h(v):=(g(v),0)\in {\Bbb R}^{d}.
$$
For $v\in\tilde M\setminus\partial N$ define
$$
h(v):=(0,\ldots,0,x_{m+1}+\ldots+x_n)\in{\Bbb R}^{d}.
$$
Then $h:\tilde M\to{\Bbb R}^{d}$ is an antipodal transversal to zeros mapping and $h^{-1}(0)=g^{-1}(0)$.
\end{proof}

\begin{theorem}
	Let $M$ be a $d$--dimensional compact PL manifold with boundary $\partial M$. Suppose  $(\partial M,A)$ is a BUT--manifold.
Let $T$ be a triangulation of  $M$ that antipodally symmetric on $\partial M$. Let $L:V(T)\to\Pi_d$ be a   labelling of  $T$ that is antipodal on the boundary. Then there is a complementary edge in $T$.
\end{theorem}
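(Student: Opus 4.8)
The plan is to deduce the statement from Theorem~\ref{TBUT} by passing to the double of $M$ furnished by Lemma~\ref{Lemma1}. First I would run the construction of Lemma~\ref{Lemma1} with $X=M$ triangulated by $T$ and $Y=\partial M$: since the embedding $F$ built there is simplicial (vertices go to vectors of a crosspolytope) and satisfies $F(A(y))=-F(y)$ on $\partial M$, the resulting closed PL manifold $\tilde M=F(M)\cup(-F(M))$ carries a natural triangulation $\tilde T$ which is antipodally symmetric for $\tilde A(x)=-x$, which restricts to a copy of $T$ on $N:=F(M)\simeq M$ and to the mirror copy $\tilde A(T)$ on $\tilde A(N)$, and which glues these two copies along $\partial N=F(\partial M)$. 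By Lemma~\ref{Lemma1}, $(\tilde M,\tilde A)$ is a $d$-dimensional BUT--manifold and $\tilde A|_{\partial N}\simeq A$.

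Next I would extend the labelling. Define $\tilde L:V(\tilde T)\to\Pi_d$ by $\tilde L(v):=L(v)$ for $v\in V(N)=V(T)$ and $\tilde L(w):=-L(\tilde A(w))$ for $w\in V(\tilde A(N))\setminus V(\partial N)$. Because $(N\setminus\partial N)\cap\tilde A(N\setminus\partial N)=\emptyset$ this is unambiguous, and on $\partial N$ the two rules agree exactly because $L$ is antipodal on $\partial M$ and $\tilde A|_{\partial N}\simeq A$; a short check on the three types of vertices (interior of $N$, interior of $\tilde A(N)$, on $\partial N$) shows $\tilde L(\tilde A(v))=-\tilde L(v)$ for all $v$, so $\tilde L$ is an antipodal labelling of the antipodal triangulation $\tilde T$ of $\tilde M$.

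Now apply Theorem~\ref{TBUT} to $(\tilde M,\tilde A)$ with the triangulation $\tilde T$ and the labelling $\tilde L$: there is a complementary edge $uv$ of $\tilde T$, i.e. $\tilde L(u)=-\tilde L(v)$. Since every simplex of $\tilde T$ is a simplex of $N$ or a simplex of $\tilde A(N)$, the edge $uv$ lies entirely in one of them. If $uv\subset N$, then $uv$ is an edge of $T$ and $L(u)=\tilde L(u)=-\tilde L(v)=-L(v)$, so $uv$ is the required complementary edge. If $uv\subset\tilde A(N)$, then $\tilde A(u)\tilde A(v)$ is an edge of $N$, hence of $T$, and
$$
L(\tilde A(u))=\tilde L(\tilde A(u))=-\tilde L(u)=\tilde L(v)=-\tilde L(\tilde A(v))=-L(\tilde A(v)),
$$
so $\tilde A(u)\tilde A(v)$ is complementary in $T$.

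The step that needs the most care is the triangulation bookkeeping: one must be sure that Lemma~\ref{Lemma1} can be invoked so that $\tilde M$ receives a triangulation genuinely extending $T$ (so "complementary edge of $\tilde T$ inside $N$" is literally "complementary edge of $T$"), and that $\tilde L$ is well defined and antipodal along the shared boundary $\partial N$. Both points rest on the fact, visible in the proof of Lemma~\ref{Lemma1}, that $F$ is simplicial and equivariant on $\partial M$; granting this, the argument is a direct application of Theorem~\ref{TBUT}.
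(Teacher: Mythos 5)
Your proposal is correct and follows essentially the same route as the paper: double $M$ via Lemma~\ref{Lemma1}, extend $T$ and $L$ antipodally to $\tilde T=T\cup\tilde A(T)$ and $\tilde L$, and apply Theorem~\ref{TBUT}. Your write-up merely makes explicit the bookkeeping (well-definedness of $\tilde L$ along $\partial N$ and pulling the complementary edge back to $T$) that the paper leaves implicit.
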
	

\begin{theorem}
	Let $M$ be a $d$--dimensional  compact PL manifold with boundary $\partial M$. Suppose  $(\partial M,A)$ is a BUT--manifold.
Let $T$ be a triangulation of  $M$ that antipodally symmetric on $\partial M$.  Let $L:V(T)\to\Pi_{d+1}$ be a   labelling of  $T$ that is antipodal on the boundary and has no complementary edges. Then for any set of  labels $\Lambda:=\{\ell_1,\ell_2,\ldots,\ell_{d+1}\}\subset\Pi_{d+1}$ with $|\ell_i|=i$ for all $i$, the number of $d$--simplices in $T$ that are labelled by $\Lambda$ or $(-\Lambda)$  is odd.
\end{theorem}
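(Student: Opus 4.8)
The plan is to reduce Theorem~3.4 to the boundaryless case, Theorem~\ref{SBUT}, by doubling $M$. Apply Lemma~\ref{Lemma1} to obtain a BUT--manifold $(\tilde M,\tilde A)$ and a PL submanifold $N\simeq M$ of $\tilde M$ with $\tilde A$ free and simplicial, $(N\setminus\partial N)\cap\tilde A(N\setminus\partial N)=\emptyset$, $\tilde M\simeq(N\setminus\partial N)\cup\partial N\cup\tilde A(N\setminus\partial N)$, and $\tilde A|_{\partial N}$ a copy of $A$. Carry the triangulation $T$ over to $N$ through the PL homeomorphism $N\simeq M$ and set $\tilde T:=T\cup\tilde A(T)$. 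Since $T$ is antipodally symmetric on $\partial M\simeq\partial N$, the two pieces induce the same triangulation on their common subcomplex $\partial N$; hence $\tilde T$ is a well-defined triangulation of $\tilde M$ and $\tilde A$ carries $\tilde T$ to itself, i.e. $\tilde T$ is an antipodally symmetric triangulation of $\tilde M$.

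Next I would extend the labelling to $\tilde L:V(\tilde T)\to\Pi_{d+1}$ by $\tilde L(v):=L(v)$ for $v\in V(T)$ and $\tilde L(\tilde A(v)):=-L(v)$ for the mirror vertices. For $v\in\partial N$ both prescriptions agree because $L$ is antipodal on $\partial M$, so $\tilde L$ is well defined and satisfies $\tilde L(\tilde A(w))=-\tilde L(w)$ for every vertex $w$; thus $\tilde L$ is antipodal. It has no complementary edge: every edge of $\tilde T$ lies in $T$ or in $\tilde A(T)$; an edge of $T$ is not complementary by hypothesis, while an edge of $\tilde A(T)$ equals $\tilde A(u)\tilde A(v)$ for some edge $uv$ of $T$, with $\tilde L$--labels $-L(u),-L(v)$, which are opposite exactly when $L(u)=-L(v)$, i.e. when $uv$ is complementary in $T$ --- which it is not.

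Now Theorem~\ref{SBUT}, applied to $(\tilde M,\tilde A)$, $\tilde T$ and $\tilde L$, gives that for the prescribed $\Lambda=\{\ell_1,\ldots,\ell_{d+1}\}$ with $|\ell_i|=i$ the number of $d$--simplices of $\tilde T$ labelled by $\Lambda$ is odd. It remains to identify this count with the one in the statement. A $d$--simplex of $\tilde T$ has nonempty interior, so it cannot be contained in the $(d-1)$--dimensional subcomplex $\partial N$; hence it lies in $T$ or in $\tilde A(T)$, and these collections are disjoint. A $d$--simplex $\tilde A(\sigma)$ of $\tilde A(T)$ carries the label set $-L(\sigma)$, so it is labelled by $\Lambda$ precisely when $\sigma$ is labelled by $-\Lambda$. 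Since $|\ell_i|=i\ge 1$ for all $i$ forces $\Lambda\ne-\Lambda$, the $d$--simplices of $\tilde T$ labelled by $\Lambda$ split as the disjoint union of the $d$--simplices of $T$ labelled by $\Lambda$ and those labelled by $-\Lambda$. Therefore the number of $d$--simplices of $T$ labelled by $\Lambda$ or $-\Lambda$ equals the odd number produced by Theorem~\ref{SBUT}, which proves the theorem.

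The step I expect to require the most care is the first: verifying that $\tilde T=T\cup\tilde A(T)$ really is a triangulation of $\tilde M$ on which $\tilde A$ acts as a free simplicial involution. This is cleanest using the explicit model from the proof of Lemma~\ref{Lemma1}, where $\tilde M=F(M)\cup(-F(M))\subset{\Bbb R}^{q+1}$ and $\tilde A(x)=-x$: one takes $\tilde T:=F(T)\cup(-F(T))$, and the antipodal symmetry of $T$ on $\partial M$ guarantees that $F(T)$ and $-F(T)$ induce the same triangulation on $F(\partial M)=-F(\partial M)$, while $\tilde A$ visibly permutes the simplices and has no fixed point (as $0\notin\tilde M$). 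Everything after that is combinatorial bookkeeping with the labels.
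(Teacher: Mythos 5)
Your proposal follows exactly the paper's route: double $M$ via Lemma~\ref{Lemma1}, extend $T$ and $L$ to an antipodal triangulation $\tilde T=T\cup\tilde A(T)$ with antipodal labelling $\tilde L$, and apply Theorem~\ref{SBUT} to the double. Your argument is correct and in fact more detailed than the paper's, since you spell out the bookkeeping step (simplices of $\tilde T$ labelled by $\Lambda$ correspond to simplices of $T$ labelled by $\Lambda$ or $-\Lambda$) that the paper leaves implicit.
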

\begin{proof} By Lemma \ref{Lemma1} there is a BUT--manifold $(\tilde M,\tilde A)$ that is the double of $M$.  We can extend  $T$ and $L$ from $M$ to an antipodal triangulation $\tilde T:=T\cup\tilde A(T)$ of $\tilde M$ and an antipodal labelling $\tilde L:V(\tilde T)\to \Pi_n$, where $n=d$ in Theorem 3.3 and $n=d+1$ in Theorem 3.4, such that $\tilde L|_T=L$.
	
Thus, for the case $n=d$ Theorem 3.3 follows from Theorem 3.1 and for $n=d+1$ Theorem 3.2 yields Theorem 3.4.
\end{proof}

\section{Shashkin's proof of Tucker's theorems}

In this section we consider two Tucker's theorems about covering families. 
Note that Tucker \cite{Tucker} obtained these theorem only for \, ${\Bbb S}^2$. Bacon \cite{Bacon} proved that statements in Theorems \ref{t41} and \ref{t42} are equivalent to the Borsuk--Ulam theorem for normal topological spaces $X$ with free continuous involutions $A:X\to X$.  (See also Theorem 2.1 in our paper \cite{MusVo}.)  Actually, these theorems  can be proved from properties of Schwarz's genus \cite{Sv} or Yang's cohomological index  \cite{Kar,MusVo}.

For the two--dimensional case  in the book \cite{Shashkin99}  Shashkin derives Tucker's theorems from his lemma. Here we extend his proof for BUT--manifolds of all dimensions.

\begin{theorem} \label{t41} Let $(M,A)$ be a $d$-dimensional BUT--manifold. Consider a family of closed sets $\{B_i, B_{-i}\},\, i=1,\ldots, d+1$, where $B_{-i}:=A(B_i)$, is such that $B_i\cap B_{-i}=\emptyset$ for all $i$.  If this family  covers $M$, then for any set of  indices $\{k_1,k_2,\ldots,k_{d+1}\}\subset\Pi_{d+1}$ with $|k_i|=i$ for all $i$, the intersection of all $B_{k_i}$  is not empty.
\end{theorem}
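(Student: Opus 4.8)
The plan is to extend Shashkin's derivation: deduce Theorem~\ref{t41} from the Shashkin-type statement for BUT--manifolds (Theorem~\ref{SBUT}) by a compactness argument over a sequence of increasingly fine antipodal triangulations of $M$. Fix once and for all the index set $\{k_1,\ldots,k_{d+1}\}\subset\Pi_{d+1}$ with $|k_i|=i$; we must produce a point lying in every $B_{k_i}$. First I would equip $M$ with a metric inducing its topology. Since $A$ is a free involution, I may replace this metric by an $A$--invariant one (add to it its pullback under $A$), so that $A$ becomes an isometry; this is convenient but not essential. Because each pair $B_i,B_{-i}$ consists of disjoint closed — hence compact — subsets of the compact space $M$, the number $\delta:=\min_{1\le i\le d+1}\dist(B_i,B_{-i})$ is strictly positive. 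By hypothesis $M$ carries a free simplicial involution $A$ with respect to some triangulation $T_0$, and I would work with the iterated barycentric subdivisions $T_m:=\mathrm{sd}^m(T_0)$, whose mesh tends to $0$ as $m\to\infty$.

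Before invoking Theorem~\ref{SBUT} I would check that each $T_m$ is again an antipodally symmetric triangulation on which $A$ acts freely and simplicially. The key observation is that a free simplicial involution fixes no simplex: if $A(s)=s$, then $A$ permutes the vertices of $s$ with no fixed vertex, so the barycenter of $s$ is a fixed point of $A$, contradicting freeness. Hence $A$ permutes the barycenters of $T_0$ without fixed points, and, inductively, permutes all simplices of $\mathrm{sd}^m(T_0)$ without fixing any of them; in particular $A$ fixes no vertex of $T_m$. Thus $(M,A)$ together with the triangulation $T_m$ satisfies the hypotheses of Theorem~\ref{SBUT}.

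Next, for each $m$ large enough that $\mathrm{mesh}(T_m)<\delta$, I would construct an antipodal labelling $L_m:V(T_m)\to\Pi_{d+1}$ as follows. Partition the vertices of $T_m$ into antipodal pairs $\{v,A(v)\}$ (which are genuine pairs since $A$ is free), choose one representative $v$ from each, pick any index $j$ with $v\in B_j$ — possible because the family covers $M$ — and set $L_m(v):=j$ and $L_m(A(v)):=-j$. This is antipodal, and by construction every vertex $w$ satisfies $w\in B_{L_m(w)}$, using $A(B_j)=B_{-j}$. If an edge $uv$ of $T_m$ were complementary, say $L_m(u)=k$ and $L_m(v)=-k$, then $u\in B_k$ and $v\in B_{-k}$ would force $\dist(u,v)\ge\dist(B_k,B_{-k})\ge\delta>\mathrm{mesh}(T_m)$, a contradiction; so $L_m$ has no complementary edge. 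Applying Theorem~\ref{SBUT} with $\Lambda=\{k_1,\ldots,k_{d+1}\}$ then yields an odd, hence positive, number of $d$--simplices of $T_m$ labelled by $\Lambda$. I would pick one such simplex $\sigma_m$, whose vertices $v_1^m,\ldots,v_{d+1}^m$ (reindexed suitably) satisfy $v_i^m\in B_{k_i}$ for each $i$.

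Finally I would pass to the limit $m\to\infty$. Since $\mathrm{diam}(\sigma_m)\le\mathrm{mesh}(T_m)\to 0$ and $M$ is compact, after passing to a subsequence the points $v_1^m$ converge to some $x\in M$, and then $v_i^m\to x$ for every $i$. As each $B_{k_i}$ is closed and contains $v_i^m$, we conclude $x\in\bigcap_{i=1}^{d+1}B_{k_i}$, so this intersection is nonempty, which is the assertion. I expect the only genuinely delicate point to be the subdivision bookkeeping of the second paragraph — confirming that the free simplicial involution survives barycentric subdivision so that Theorem~\ref{SBUT} really applies to each $T_m$; the labelling construction, the no-complementary-edge check via the mesh bound, and the final compactness argument are all routine.
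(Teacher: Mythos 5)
Your proposal is correct and follows essentially the same route as the paper: label the vertices of a sequence of antipodal triangulations with mesh tending to zero by indices of covering sets, verify via the positive distance between $B_i$ and $B_{-i}$ that no complementary edge occurs, apply Theorem~\ref{SBUT} to obtain a $d$--simplex labelled by $\{k_1,\ldots,k_{d+1}\}$ in each fine triangulation, and conclude by compactness and closedness of the $B_{k_i}$. The only differences are cosmetic: you build the triangulations by barycentric subdivision and label by choosing representatives of antipodal vertex pairs, whereas the paper takes an arbitrary sequence of antipodal triangulations and uses the minimal-index labelling rule, and you make explicit the $\delta>0$ and subdivision details that the paper leaves implicit.
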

\begin{proof} Note that any PL manifold admits a metric. For a triangulation $T$ of $M$, the norm of $T$, denoted by $|T|$, is the diameter of the largest simplex in $T$.

Let $T_1,T_2,\ldots$ be a sequence of antipodal triangulations of $M$ such that $|T_i|\to0$.
Now for all $i$ define an antipodal labelling $L_i:V(T_i)\to \Pi_{d+1}$. For every  $v\in V(T_i)\subset M$ set
$$
L_i(v):=\ell, \mbox{ where } v\in B_\ell \mbox{ and } |\ell|=\min{\{|k|: v\in B_k\}}.
$$
Then $L_i$ satisfies the assumptions in Theorem \ref{SBUT} and $T_i$ contains a simplex $s_i$ with labels $\{k_1,k_2,\ldots,k_{d+1}\}\subset\Pi_{d+1}$.

 Since $M$ is compact and $|s_i|\to0$, the sequence $\{s_i\}$ contains a converging subsequence $P$  with limit $w\in M$. Then for $s_i\in P$ we have $V(s_i)\to w$.

By assumption, all $B_k$ are closed sets.  Therefore $w\in B_{k_j}$ for all $j=1,\ldots, d+1$, and thus $w\in \cap_j{B_{k_j}}$.
\end{proof}

\begin{theorem} \label{t42} Let $(M,A)$ be a $d$-dimensional BUT--manifold. Suppose that $M$ is covered by a family $\mathcal F$ of $d+2$ closed subsets $C_1,\ldots,C_{d+2}$. Suppose that all $C_i$ have no antipodal pairs $(x,A(x))$, in other words, $C_i\cap A(C_i)=\emptyset$. Let $0<k<d+2$. Then  any $k$ subsets from $\mathcal F$ intersect and there is a point $x$ in this intersection such that $A(x)$ belongs to the intersection of the remaining $(d+2-k)$ subsets in $\mathcal F$.
\end{theorem}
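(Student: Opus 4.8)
The plan is to deduce this from Theorem~\ref{t41}. Relabelling the sets, I may assume the $k$ chosen subsets are $C_1,\dots,C_k$; set $I=\{1,\dots,k\}$ and $J=\{k+1,\dots,d+2\}$, both nonempty because $0<k<d+2$. It suffices to exhibit a point $x$ with $x\in\bigcap_{i\in I}C_i$ and $A(x)\in\bigcap_{j\in J}C_j$, equivalently $x\in\bigcap_{i\in I}C_i\cap\bigcap_{j\in J}A(C_j)$, since then $x$ witnesses both assertions (in particular the $k$-fold intersection $\bigcap_{i\in I}C_i$ is nonempty).

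First I would fix $p\in I$, $q\in J$ and build a covering family of $d+1$ disjoint antipodal pairs from the sets $C_\ell$ and $A(C_\ell)$: take the $d$ ordinary pairs $\{C_\ell,A(C_\ell)\}$ for $\ell\in\{1,\dots,d+2\}\setminus\{p,q\}$, together with the one ``mixed'' pair $B:=C_p\cap A(C_q)$ and $A(B)=A(C_p)\cap C_q$. All these sets are closed. Each ordinary pair is disjoint by hypothesis, and $B\cap A(B)=\emptyset$ since a common point would lie in $C_p\cap A(C_p)=\emptyset$. The point of folding the ``extra'' index into an \emph{intersection} rather than a union is exactly to keep this last pair disjoint — a union would fail — and this is the one genuinely new idea in the argument.

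The substantive step is then to check that this family covers $M$; this is where all the hypotheses are used. Writing $W:=\bigcup_{\ell\neq p,q}C_\ell$, the union of the ordinary pairs is $W\cup A(W)$, so I must show $M\setminus(W\cup A(W))\subseteq B\cup A(B)$. Since the $C_\ell$ cover $M$ we have $W\supseteq M\setminus(C_p\cup C_q)$, hence $M\setminus(W\cup A(W))\subseteq(C_p\cup C_q)\cap(A(C_p)\cup A(C_q))$; expanding the product and deleting the empty terms $C_p\cap A(C_p)$ and $C_q\cap A(C_q)$ leaves precisely $(C_p\cap A(C_q))\cup(C_q\cap A(C_p))=B\cup A(B)$, as required.

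Finally, Theorem~\ref{t41} applies to this family of $d+1$ pairs: selecting $C_i$ for $i\in I\setminus\{p\}$, $A(C_j)$ for $j\in J\setminus\{q\}$, and the half $B=C_p\cap A(C_q)$ of the mixed pair, the intersection of the chosen sets is nonempty. That intersection equals $\bigcap_{i\in I}C_i\cap\bigcap_{j\in J}A(C_j)$, which provides the desired $x$. I expect the covering verification in the third paragraph to be the main obstacle; once the pairs are set up correctly the conclusion is immediate.
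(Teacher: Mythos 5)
Your proof is correct, and it reaches the conclusion by a genuinely different decomposition than the paper, though both arguments are reductions to Theorem \ref{t41}. The paper first normalizes $k\ge(d+2)/2$, sets $m=\lceil d/2\rceil$, and builds the $d+1$ antipodal pairs cyclically, $B_i:=C_i\cap\bigl(C_{-(i+1)}\cup\dots\cup C_{-(i+m)}\cup C_{-(d+2)}\bigr)$ with $C_{-j}:=A(C_j)$ and indices taken cyclically in $\{1,\dots,d+1\}$; applying Theorem \ref{t41} with the signs dictated by the partition gives a point $x\in C_1\cap\dots\cap C_k$ with $A(x)\in C_{k+1}\cap\dots\cap C_{d+1}$, and membership of $A(x)$ in the leftover set $C_{d+2}$ must then be extracted in a separate final step from $x\in B_1$ together with $k\ge m+1$. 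You instead leave the $d$ pairs $\{C_\ell,A(C_\ell)\}$, $\ell\ne p,q$, untouched and fold the two distinguished indices $p\in I$, $q\in J$ into the single mixed pair $B=C_p\cap A(C_q)$, $A(B)=A(C_p)\cap C_q$; your covering check $M\setminus\bigl(W\cup A(W)\bigr)\subseteq\bigl(C_p\cup C_q\bigr)\cap\bigl(A(C_p)\cup A(C_q)\bigr)=B\cup A(B)$ is a short set-theoretic computation, the disjointness of the mixed pair follows from $C_p\cap A(C_p)=\emptyset$ exactly as you say, and the desired point drops out of Theorem \ref{t41} directly, with no normalization of $k$ beyond relabelling (you correctly use $0<k<d+2$ only to ensure $I$ and $J$ are nonempty so that $p,q$ exist) and no post-processing for an exceptional index. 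What the paper's cyclic construction buys is that every $B_i$ sits inside the correspondingly indexed $C_i$ for $i=1,\dots,d+1$, which makes the sign pattern fed into Theorem \ref{t41} transparent; what yours buys is symmetry under $k\mapsto d+2-k$, a cleaner covering verification, and a conclusion that needs no extra argument for $C_{d+2}$.
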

\begin{proof} Without loss of generality, we can assume that $k\ge(d+2)/2$ and that the $k$ subsets from $\mathcal F$ are $C_1,\ldots,C_k$. Therefore, we have to prove that there is $x\in M$ such that
	$$
	x\in \bigcap\limits_{i=1}^k{C_i} \; \mbox{ and } \; A(x)\in \bigcap\limits_{i=k+1}^{d+2}{C_i}
	$$
	
Set $C_{-i}:=A(C_i)$. Let $m:=\lceil{d/2}\rceil,$
$$
B_1:=C_1\cap(C_{-2}\cup\ldots\cup C_{-(m+1)}\cup C_{-(d+2)}),
$$
$$
B_2:=C_2\cap(C_{-3}\cup\ldots\cup C_{-(m+2)}\cup C_{-(d+2)}),
$$
$$
\vdots
$$
$$
B_d:=C_d\cap(C_{-(d+1)}\cup C_{-1}\cup\ldots\cup C_{-(m-1)}\cup  C_{-(d+2)}),
$$
$$
B_{d+1}:=C_{d+1}\cap(C_{-1}\cup\ldots\cup C_{-m}\cup C_{-(d+2)}).
$$

If $B_{-i}:=A(B_i)$, then
$$
\bigcup\limits_{i=1}^{d+1}{B_i\cup B_{-i}}=\bigcup\limits_{i=1}^{d+2}{C_i\cap(C_1\cup\ldots\cup C_{d+2})}=\bigcup\limits_{i=1}^{d+2}{C_i\cap M}=\bigcup\limits_{i=1}^{d+2}{C_i}=M.
$$
On the other hand, $B_i\subset C_i$ and $B_{-i}\subset C_{-i}$, hence $B_i\cap B_{-i}=\emptyset$. Therefore, the family of subsets $\{B_i\}$ satisfies the assumptions of Theorem \ref{t41}. It follows that
$$
Q:=B_1\cap\ldots\cap B_k\cap B_{-(k+1)}\cap\ldots\cap B_{-(d+1)}\ne\emptyset.
$$
Let $x\in Q$. Then $$x\in C_1\cap\ldots\cap C_k \, \mbox{ and } \, A(x)\in C_{-(k+1)}\cap\ldots\cap C_{-(d+1)}.$$
Since $k\ge m+1$ and $x\in B_1=C_1\cap(C_{-2}\cup\ldots\cup C_{-(m+1)}\cup C_{-(d+2)})$, we have $x\in C_{-(d+2)}$, i.e. $A(x)\in C_{d+2}$.
\end{proof}

\begin{cor} Let $(M,A)$ be a $d$-dimensional BUT--manifold. Then $M$ cannot
be covered by $d+1$ closed sets, none containing a pair $(x,A(x))$ of antipodal points.
\end{cor}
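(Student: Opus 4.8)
The plan is to deduce Corollary 4.1 from Theorem \ref{t42} by a one‑set padding trick. Assume, for contradiction, that the $d$‑dimensional BUT--manifold $M$ is covered by closed sets $C_1,\dots,C_{d+1}$, none of which contains a pair $(x,A(x))$, i.e.\ $C_i\cap A(C_i)=\emptyset$ for every $i$. First I would enlarge the family by a single empty set, setting $C_{d+2}:=\emptyset$. The enlarged family $\mathcal F=\{C_1,\dots,C_{d+2}\}$ consists of $d+2$ closed subsets, still covers $M$, and satisfies $C_i\cap A(C_i)=\emptyset$ for all $i$ (vacuously for $i=d+2$). Thus $\mathcal F$ meets all the hypotheses of Theorem \ref{t42}.

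Next I would apply Theorem \ref{t42} with $k=d+1$ (which satisfies $0<k<d+2$) to the $k$ subsets $C_1,\dots,C_{d+1}$. The theorem yields a point $x\in\bigcap_{i=1}^{d+1}C_i$ such that $A(x)$ lies in the intersection of the remaining $d+2-k=1$ subset, i.e.\ $A(x)\in C_{d+2}=\emptyset$. This is impossible, so the assumed covering cannot exist, which is exactly the assertion of the corollary.

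The argument is routine, and I expect no genuine obstacle. The only point deserving a line of verification is that the empty set is an admissible member of the covering family and that the proof of Theorem \ref{t42} stays valid with $C_{d+2}=C_{-(d+2)}=\emptyset$: the terms involving $C_{-(d+2)}$ simply drop out of the unions defining the $B_i$, and the final step of that proof then forces the desired contradiction instead of producing the conclusion $A(x)\in C_{d+2}$. A reader who prefers nonempty sets can instead take $C_{d+2}:=\{p\}$ for an arbitrary $p\in M$ (permissible since $A(p)\ne p$), apply Theorem \ref{t42} as above to obtain $A(p)\in\bigcap_{i=1}^{d+1}C_i$ for every $p\in M$, hence $C_1=M$, contradicting $C_1\cap A(C_1)=\emptyset$ since $M$, being a BUT--manifold, is nonempty.

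For completeness I would remark that the corollary also follows directly from the definition of a BUT--manifold, without invoking Theorems \ref{t41}--\ref{t42}: given a hypothetical covering $M=C_1\cup\dots\cup C_{d+1}$ as above, the map $g:M\to{\Bbb R}^{d}$, $g(x):=\bigl(\dist(x,C_1),\dots,\dist(x,C_d)\bigr)$, is continuous, so there is $x\in M$ with $g(A(x))=g(x)$. If $\dist(x,C_i)=0$ for some $i\le d$, then $x,A(x)\in C_i$ (as $C_i$ is closed), contradicting $C_i\cap A(C_i)=\emptyset$; otherwise $x,A(x)\notin C_1\cup\dots\cup C_d$, so $x,A(x)\in C_{d+1}$, again a contradiction. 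I would present the deduction from Theorem \ref{t42} as the primary proof, since it is the shortest, and mention this distance‑function argument as an alternative.
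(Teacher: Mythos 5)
Your proposal is correct and takes essentially the same route as the paper: pad the covering with a $(d+2)$-nd closed set and apply Theorem \ref{t42} with $k=d+1$ to reach a contradiction. The paper's padding choice is $C_{d+2}:=C_1$, which immediately gives $(x,A(x))\in C_1$ and sidesteps the empty-set admissibility point you rightly pause to verify (your singleton variant achieves the same), while your distance-function remark is a valid, more elementary alternative arguing directly from the definition of a BUT--manifold.
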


Note that the case $M={\Bbb S}^d$ was first considered by Lusternik and Schnirelmann in 1930.

\begin{proof} Suppose the converse, so $M$ can be covered by closed subsets $C_1,\ldots,C_{d+1}$.
	
	Let $C_{d+2}:=C_1$. Then this covering satisfies the assumptions of Theorem \ref{t42}. So there is $x$ such that
	$$
	x\in \bigcap\limits_{i=1}^{d+1}{C_i} \; \mbox{ and } \; A(x)\in C_{d+2}, \; \mbox{ i.e. } \; (x,A(x)) \in C_1,
	$$
a contradiction. 	
\end{proof}

\medskip

\medskip

\noindent{\bf Acknowledgment.} I  wish to thank Fr\'ed\'eric Meunier  for helpful discussions and  comments.

 \medskip

\noindent O. R. Musin\\ 
 University of Texas Rio Grande Valley, School of Mathematical and Statistical Sciences, One West University Boulevard, Brownsville, TX, 78520 \\
{\it E-mail address:} oleg.musin@utrgv.edu

\end{document}